\newtheorem{theorem}{Theorem}[]
\renewcommand{\geq}{\geqslant}
\renewcommand{\leq}{\leqslant}
\newcommand{\R}{\mathbb{R}}
\newcommand{\eps}{\varepsilon}
\newcommand{\PP}{\mathbb{P}}
\newcommand{\E}{\mathbb{E}}
\newcommand{\e}{\mathrm{e}}
\newcommand{\stochconv}{\stackrel{{\mathcal{D}}}{\longrightarrow}}
\newcommand{\tX}{\widetilde{X}}
\newcommand{\tZ}{\widetilde{Z}}
\newcommand{\tro}{\tilde{\rho}}
\def\d{{\rm d}}
\def\R{{\mathbb{R}}}
\begin{document}

\begin{center}
\textbf{ \Large{The diminishing segment process}}
\bigskip

\textsc{Gergely Ambrus
\footnote{Supported by the OTKA grants 75016 and 76099.}} \\
Alfr\'ed R\'enyi Institute of Mathematics, Hungarian Academy of Sciences, PO Box
127, 1364 Budapest, Hungary; \textit{e-mail:} \texttt{ambrus@renyi.hu}

\smallskip

\textsc{P\'eter Kevei \footnote{Supported by the Analysis and Stochastics
Research Group of the Hungarian Academy of Sciences.}} \\
Centro de Investigaci\'on en Matem\'aticas, Jalisco S/N, Valenciana, Guanajuato,
GTO 36240, Mexico; \textit{e-mail:} \texttt{kevei@cimat.mx}
\smallskip

\textsc{Viktor V\'igh \footnote{Supported by the OTKA grant 75016, and by NSERC of Canada.}} \\
Department of Mathematics and Statistics, University of Calgary\\ 2500
University Drive NW Calgary, AB, Canada
T2N 1N4\\
\textit{e-mail:} \texttt{vigvik@gmail.com}\\

\end{center}

\begin{abstract}
Let $\Xi_0 = [-1,1]$, and define the segments $\Xi_n$ recursively in the
following manner: for every $n=0, 1, \dots$, let $\Xi_{n+1} = \Xi_n \cap
[a_{n+1} -1, a_{n+1} +1]$, where the point $a_{n+1}$ is chosen randomly on the
segment $\Xi_n$ with uniform distribution. For the radius $\rho_n$ of $\Xi_n$
we prove that $n (\rho_n - 1/2)$ converges in distribution to an exponential
law, and we show that the centre of the limiting unit interval  has arcsine
distribution.

\textit{Keywords:} Arcsine law; Continuous state space Markov chain;
Poisson--Dirichlet law; Intersection of convex discs.
\end{abstract}

\section{Introduction} \label{introd}

We consider the following stochastic process. Let $\Xi_0 = [-1,1]$, and
define the segments $\Xi_n$ recursively in the following manner: for every
$n=0, 1, \dots$, let
\[
\Xi_{n+1} = \Xi_n \cap [a_{n+1} -1, a_{n+1} +1],
\]
where the point $a_{n+1}$ is chosen randomly on the segment $\Xi_n$ with uniform
distribution.
After $n$ steps, one obtains the segment
\[
\Xi_n = [Z_n - \rho_n, Z_n + \rho_n].
\]
The (centre, radius) process $(Z_n, \rho_n)$ is a continuous state space Markov
chain.  The radius sequence $(\rho_n)$ is monotonically decreasing, and  it is
easy to see that with probability 1, $\rho_n \to 1/2$. Moreover, $(Z_n)$ is a
convergent sequence, assuming values on $[-1/2, 1/2]$. Denote $\lim_{n\to \infty} Z_n$ by
$Z$.

We are interested in the most straightforward
questions: {\em
\begin{itemize}
\item[(1)] What is the distribution of $Z_n$ and $\rho_n$ for a given $n$?
\item[(2)] What is the asymptotic behaviour of the radius?
\item[(3)] What is the limit distribution of the centre?
\end{itemize}
}

Our work was motivated by the following problem formulated by B\'alint
T\'oth (T\'oth, 2010) some 20 years ago with $K$ being the unit disc of the plane. 
Let $K = K_0$ be a convex body
in the $\R^d$ that contains the origin, and  define the process $(K_n, p_n)$,
$n \geq 1$, similarly to the above construction: let $p_{n+1}$ be a uniform
random point in $K_n$, and set $K_{n+1} = K_n \cap (p_{n+1} + K)$. Clearly,
$(K_n)$ is a nested sequence of convex bodies which converge to a non-empty
limit object, again a convex body in $\R^d$. What can we say about the
distribution of this limit body?
When $K$ is the unit disc the limit object is almost surely a convex disc of constant
width 1. The present note deals with the $1$-dimensional analogue of this
problem. Intriguingly, apart from almost trivial results, nothing is known
about the related questions even in the plane.

Another direction of generalising the problem treated here is to choose the
subsequent centres according to a previously fixed distribution instead of the
uniform one. Research on this version is currently in progress.

The paper is organised as follows. In Section 2 we give a recursion for the
density function of $\rho_n$, which allows us to explicitly calculate the
expectation for small $n$'s. In Section 3 we show that $n (\rho_n - 1/2)$
converges in distribution to an exponential law, which actually shows the
rapidness of the process. Section 4 contains the limit distribution of the
centre, while in the last section we derive a somewhat unexpected connection
between the process and the Poisson--Dirichlet distribution.

\section{First observations}

At the $(n+1)$st step, we separate two cases according to the location of
$a_{n+1}$. First, if $a_{n+1} \in [Z_n + \rho_n -1 , Z_n - \rho_n + 1]$, then
no change occurs to the segment, and thus $\Xi_{n+1} = \Xi_n$. In the second
case, when $a_{n+1}$ is close to one of the endpoints of $\Xi_n$, the centre
moves and the length decreases. Introduce yet two new random processes
measuring the change of the location of the centre by
\[
\eps_n X_n = Z_n  -Z_{n-1}, \ n \geq 1,
\]
with $\eps_n = \pm 1$ and $X_n \geq 0$ (if $Z_{n} = Z_{n-1}$, then of no
consequence, let $\eps_n = 1$). Thus, for $n \geq 1$,
\begin{equation} \label{x-def}
X_{n} =
\begin{cases}
0 & \textrm{w.prob.  $(1 - \rho_{n-1})/\rho_{n-1} $}\\
\textrm{uniform on $[0,\rho_{n-1} - 1/2]$} & \textrm{w.prob. $(2 \rho_{n-1} -
1)/\rho_{n-1}$,}
\end{cases}
\end{equation}
moreover,
\[
\PP(\eps_n=1 | X_n \neq 0 ) = \frac 1 2 \, .
\]
By definition,
\begin{equation}\label{znx}
Z_n =\sum_{i=1}^n \eps_i X_i,
\end{equation}
and it is easy to see that
\begin{equation*}\label{rx}
\rho_n = 1 - \sum_{i=1}^n X_i =: 1 - S_n.
\end{equation*}
Thus, with probability 1, $\sum_1^\infty X_i = 1/2$.  By an inductive argument,
it follows that $S_n$ has a continuous distribution. Denote by $f_n(x)$ the
probability density function of $S_n$. Using the Markov property and
\eqref{x-def} it is easy to express $f_{n+1}$ in terms of $f_n$: for $0 \leq x
\leq 1/2$,
\begin{align*}
\PP(S_{n+1} > x)
& = \int_0^{1/2} \PP(  S_{n+1} > x | S_n = y ) f_n(y) \, \d y \\
& = \PP(S_n  >  x) + \int_0^x f_n(y) \frac{1-2x}{1-y} \, \d y,
\end{align*}
whence differentiating
\begin{equation}\label{fnrecurs}
f_{n+1}(x) = f_n(x) \frac  x {1-x} + 2 \int_0^x \frac {f_n(y)}{1 -y}  \, \d y.
\end{equation}
The first few examples are, for $0 \leq x \leq 1/2$,
\begin{align*}
f_1(x) &= 2 \\
f_2(x) &= \frac{2 x}{1 - x} - 4 \ln(1 - x)\\
f_3(x) &= \frac{2 x(2-x)}{(1 - x)^2} + \frac{4 (1 - 2 x)}{ 1 - x} \ln (1-x) + 4
( \ln (1-x))^2.
\end{align*}

In order to calculate the expectation of $S_n$ (and thus $\E \rho_n$), we
consider the Taylor series expansion of $f_n$ about 0:
\begin{equation*}
f_n(x) = \sum_{k=0}^\infty c_{n,k}x^k.
\end{equation*}
Based on \eqref{fnrecurs}, one readily obtains the formula
\begin{equation}\label{cnrecurs}
c_{n+1,k} = \frac {k+2}{k} \sum_{j = 0}^{k-1} c_{n,j}\, ,
\end{equation}
and hence
\begin{equation} \label{esnrecurs} \E S_n = \int_{0}^{1/2} x f_n(x)
\d x = \sum_{k=0}^\infty c_{n,k} \frac {2^{-(k+2)}}{k+2}  =\frac 1 4  \sum
_{j=0}^\infty c_{n-1,j} \sum_{k = j+1}^\infty \frac {2^{-k}}{k},
\end{equation}
which may be useful for obtaining a recursive expression involving only the
expectations. These formulas also enable us to efficiently compute the
expectations for relatively small $n$'s, see the figure below.

\begin{figure}[h]
\epsfxsize =8 cm \centerline{\epsffile{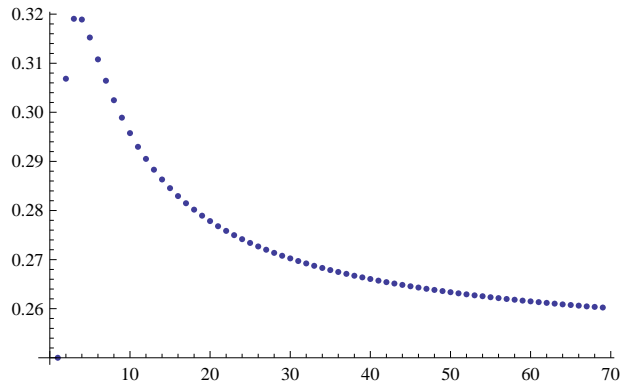}}
  \caption{The first 70 values of $n (1/2 - \E S_n)$ }
 \label{fv1}
\end{figure}

\section{Asymptotics of the length}

We determine the asymptotic behavior of $\rho_n$, the radius of the $n$th
segment.

\begin{theorem}
We have the distributional convergence
\[
n \left( \rho_n - \frac{1}{2} \right) \stochconv \mathrm{Exp}(4).
\]
Accordingly, for any $k \geq 1$,
\[
\lim_{n \to \infty} \E n^k \left( \rho_n - \frac{1}{2} \right)^k =
\frac{k!}{4^k}\,.
\]
\end{theorem}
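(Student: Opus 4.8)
The plan is to follow the rescaled radius-deficit. Set $R_n:=\rho_n-\tfrac12=\sum_{i>n}X_i\ge 0$ and $W_n:=nR_n$; the theorem asks for $W_n\stochconv\mathrm{Exp}(4)$ together with convergence of all moments. By \eqref{x-def}, $(R_n)$ is a Markov chain, $R_{n+1}=R_n-X_{n+1}$, where conditionally on $R_n=r$ the increment $X_{n+1}$ equals $0$ with probability $\tfrac{1/2-r}{1/2+r}$ and is uniform on $[0,r]$ with probability $\tfrac{2r}{1/2+r}$. I would prove the distributional statement via the survival function $G_n(w):=\PP(W_n>w)=\PP(R_n>w/n)$ and then deduce the moment statement by uniform integrability.

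Two exact one-step identities do the bookkeeping. First, $\PP(R_{n+1}>a\mid R_n=r)=\mathbf{1}_{\{r>a\}}\bigl(1-\tfrac{2a}{1/2+r}\bigr)$ for $a>0$; since $R_n$ is of order $1/n$, so that $\tfrac1{1/2+R_n}=2+O(R_n)$, this yields
\[
G_{n+1}(w)=\Bigl(1-\tfrac{4w}{n+1}\Bigr)\,G_n\!\Bigl(\tfrac{n}{n+1}\,w\Bigr)+O\!\Bigl(\tfrac{w}{n^2}\Bigr),
\]
the error being $O(w/n^2)$ as it is bounded by $\tfrac wn\,\E R_n$. Second, for $m_k(n):=\E R_n^k$ one computes $\E[R_{n+1}^k\mid R_n]=R_n^k\bigl(1-\tfrac{4k}{k+1}\tfrac{R_n}{1+2R_n}\bigr)$; hence each $m_k$ is non-increasing with $m_k(n)\to 0$, and $m_k(n)=\tfrac{4k}{k+1}\sum_{m\ge n}\E\tfrac{R_m^{k+1}}{1+2R_m}$, which, because $\tfrac12\le\tfrac1{1+2r}\le1$ on $[0,\tfrac12]$, squeezes $m_{k-1}(n)$ between constant multiples of $\sum_{m\ge n}m_k(m)$.

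From the moment identity I would first deduce the a priori bounds $B_k:=\sup_n n^k m_k(n)<\infty$ for every $k$. For $k=1$, concavity of $r\mapsto r/(1+2r)$ and Jensen give $1/m_1(n+1)\ge 1/m_1(n)+2$, hence $m_1(n)\le\tfrac1{2n+2}$; the inductive step combines $\sum_{m\ge n}m_k(m)\le\tfrac{k}{2(k-1)}m_{k-1}(n)$ with monotonicity of $m_k$ (via $n\,m_k(2n)\le\sum_{m=n}^{2n-1}m_k(m)$), passing from $B_{k-1}$ to $B_k$. These bounds make $\{W_n\}$ tight (as $\E W_n\le\tfrac12$) and $\{W_n^k\}_n$ uniformly integrable for each $k$. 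Now I turn to the survival recursion: by tightness, any subsequence of $(G_n)$ has a convergent sub-subsequence with limit a survival function $G$; iterating the recursion over short time-windows $[n,\lambda n]$ — using monotonicity of $G_n$ to handle the shifted argument — shows that $G$ is invariant under the limiting transport flow $\partial_t G=-w(\partial_w G+4G)$ ($t=\log n$), hence solves $G'(w)=-4G(w)$ on $(0,\infty)$, so $G(w)=G(0^+)\,e^{-4w}$. It remains to pin $G(0^+)=1$, equivalently to show $W_n$ carries no asymptotic mass at the origin; granting this, $W_n\stochconv\mathrm{Exp}(4)$, and uniform integrability upgrades this to $\E W_n^k\to k!/4^k$, the $k$-th moment of $\mathrm{Exp}(4)$.

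The main obstacle is precisely this last point — the limit passage near $w=0$. All the relations above are scale-invariant at leading order, so the survival recursion and the moment identities determine the limit only up to a possible atom at $0$: without extra input they give $W_n\stochconv p\,\delta_0+(1-p)\mathrm{Exp}(4)$ and $\E W_n^k\to(1-p)\,k!/4^k$ for some $p\in[0,1]$, and the real content of the theorem is $p=0$, equivalently $n\,\E R_n\to\tfrac14$. I would establish this through the stick-breaking representation $R_n=\tfrac12\prod_{j=1}^{N_n}V_j$, where the $V_j$ are independent uniform $[0,1]$ variables and $N_n$ is the number of ``jump'' steps among the first $n$: one has $\E N_n=\sum_{m=0}^{n-1}\E\tfrac{4R_m}{1+2R_m}=4\sum_{m<n}m_1(m)+O(1)$, and, conditionally on the $V_j$, the $j$-th jump time is a sum of independent geometric waiting times with means comparable to $\tfrac12\exp\!\bigl(-\sum_{i<j}\log V_i\bigr)$; a renewal-overshoot estimate then shows that $\Gamma_{N_n}:=-\sum_{j\le N_n}\log V_j$ satisfies that $\Gamma_{N_n}-\log n$ is tight, whence $W_n=\tfrac n2\,e^{-\Gamma_{N_n}}$ is tight and bounded away from $0$ in probability, forcing $p=0$. (A more computational route is to read the boundary asymptotics $n^{-1}f_n(\tfrac12-\tfrac wn)\to 4e^{-4w}$ straight off \eqref{fnrecurs}.) With $p=0$, the transport equation and uniform integrability conclude both assertions.
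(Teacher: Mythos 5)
Your route is genuinely different from the paper's, and in outline it can be made to work, but it is considerably heavier and leaves its hardest step as a sketch. The paper's proof rests on a single structural observation: given $S_n=1/2-R_n$, the next value $S_{n+1}$ is distributed as $\max\{S_n,U\}$ with $U$ uniform on $[S_n/2,1/2]$. This turns the chain into something squeezed between running maxima of i.i.d.\ uniforms on $[1/4-\alpha/2,1/2]$ (from below, after a burn-in time $\beta n$ by which $S_n\geq 1/2-\eps$ a.s.) and on $[1/4,1/2]$ (from above), and classical extreme value theory for maxima delivers $\mathrm{Exp}(4)$ directly. Crucially, the lower comparison $\PP(n(1/2-S_n)>x)\geq\PP(n(1/2-M_n^0)>x)\to \e^{-4x}$ disposes for free of exactly the point you correctly identify as the crux of your approach — that no limiting mass escapes to $0$ — and the upper comparison with $M_n^{1/2}$ gives the uniform integrability in one line. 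Your explicit computations (the one-step survival identity, the moment recursion $\E[R_{n+1}^k\mid R_n]=R_n^k(1-\tfrac{4k}{k+1}\tfrac{R_n}{1+2R_n})$, the Jensen bound $m_1(n)\leq\tfrac1{2n+2}$, and the induction giving $\sup_n n^k\E R_n^k<\infty$) are all correct and are a nice self-contained way to get tightness and uniform integrability without extreme value theory.

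Two steps of your argument, however, are not yet proofs. First, the passage from the recursion $G_{n+1}(w)=(1-\tfrac{4w}{n+1})G_n(\tfrac{n}{n+1}w)+O(w/n^2)$ to the ODE $G'=-4G$ requires that the subsequential limits of $G_n$ and of $G_{\lfloor\lambda n\rfloor}$ coincide; as stated you only get the relation $\widetilde G(w)=G(w/\lambda)\e^{-4w(1-1/\lambda)}$ between two possibly different subsequential limits, and closing this loop needs the uniform version of your no-atom estimate, $\lim_{\delta\to0}\limsup_n\PP(W_n\leq\delta)=0$, fed back into a $\lambda\to\infty$ argument. Second, the renewal-overshoot step itself is only sketched: the $V_j$ and the geometric waiting times are dependent (the waiting-time parameters are functions of the $V$'s), so the claim that $\Gamma_{N_n}-\log n$ is tight and, more importantly, bounded above in probability needs an actual argument (e.g.\ a dyadic-level or overshoot computation using that $-\log V_j$ is $\mathrm{Exp}(1)$). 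Both gaps are fillable, but they constitute the real work of your proof, whereas the paper's sandwich argument avoids them entirely.
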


\begin{proof}
Since $\rho_n = 1 - S_n$, it is equivalent to prove the corresponding limit theorems for
$1/2 - S_n$.

It follows from (\ref{x-def}) that
$$
S_{n+1} =
\begin{cases}
S_n & \textrm{w.prob.  $ S_n / (1 -  S_n)$}\\
\textrm{uniform on $[S_n, 1/2]$} & \textrm{w.prob. $(1 - 2 S_n) / (1 - S_n)$.}
\end{cases}
$$
Observe that given $S_n$, $S_{n+1}$
has the same distribution as $ \max \{ S_n, U \}$, where $U$ is a uniform
random variable on $[S_n /2,1/2]$, independent of $S_n$. Let $0 \leq \alpha
\leq 1/2$, and $U_1^\alpha, U_2^\alpha, \dots$ be independent, uniform random
variables on $[1/4 - \alpha/2, 1/2]$. Define $M^\alpha_n = \max_{1 \leq i \leq
n} U^\alpha _i$. It is well-known in extreme value theory (Billingsley, 1995, p.192), and
easy to check that
\begin{equation} \label{maxconv}
n \left(  \frac{1}{2} - M_n^\alpha  \right) \stochconv \mathrm{Exp}(4/(1+ 2 \alpha)).
\end{equation}

We say that $X$ \emph{stochastically dominates} (or just dominates) $Y$ if
$\PP( X > x) \geq \PP(Y > x)$ for all $x \in \R$.

To obtain a lower estimate, we use that $S_n \leq 1/2$, and thus an easy
induction argument shows that $M^0_n$ dominates $S_n$. From this and
\eqref{maxconv} we obtain
\[
\PP \left( n \left( \frac{1}{2} - S_n \right) > x \right) \geq \PP \left( n
\left( \frac{1}{2} - M_n^0 \right) > x \right) \to \e^{-4x},
\]
i.e.
\[
\liminf_{n \to \infty} \PP \left( n \left( \frac{1}{2} - S_n \right) > x
\right) \geq \e^{-4x}, \quad x \geq 0.
\]

The almost sure convergence $S_n \to 1/2$ heuristically means that with
overwhelming probability, $1/2 - S_n/2 \approx 1/4$,  hence for $n$
sufficiently large, $S_n$ behaves approximately like $M^0_n$ does. This
heuristic idea is made precise as follows. Fix the small positive numbers
$\beta
> 0$ and $\eps > 0$. Since $S_n \to 1/2$ a.s., for $n$ sufficiently large,
$\PP( S_{\beta n} < 1/2 - \eps ) \leq \eps$. Moreover, if $S_{\beta n} \geq 1/2
- \eps$, then $S_n$ is minored by $M^\eps_{n - \beta n}$. This can be shown
again by induction. Thus, by \eqref{maxconv},
\begin{align*}
\PP \left( n \left( \frac{1}{2} - S_n \right) > x \right)
& = \PP \left( S_n < \frac{1}{2} - \frac{x}{n} \right)  \\
& \leq \PP \left( S_n < \frac{1}{2} - \frac{x}{n} \,
\Big| \, S_{\beta n} \geq \frac{1}{2} - \eps  \right) + \eps \\
& \leq \PP \left( M_{( 1 - \beta) n}^\eps < \frac{1}{2} - \frac{x}{n} \right) + \eps\\
& \to \e^{- \frac{4x(1-\beta)}{1 + 2 \eps}} + \eps \, ,
\end{align*}
i.e.~for $x \geq 0$
\[
\limsup_{n \to \infty} \PP \left( n \left( \frac{1}{2} - S_n \right) > x
\right) \leq \e^{- \frac{4 x(1-\beta)}{1 + 2 \eps}} + \eps.
\]
Since $\eps$ and $\beta$ are arbitrary, this gives the distributional
convergence.

To prove the convergence of the moments, it is enough to show that for any $k
\geq 1$, the sequence $\{ n^k \left( \frac{1}{2} - S_n \right)^k
\}_{n=1}^\infty$ is uniformly integrable (Billingsley, 1995, Theorem 25.12). The fact
that $S_n$ dominates $M_n^{1/2}$ readily implies
\[
 \PP \left( n \left( \frac{1}{2} - S_n \right) > x \right) \leq \e^{-2x},
\]
which shows  uniform integrability.
\end{proof}

In particular, $\E \rho_n = 1/2 - 1/(4n) + o(n^{-1})$ and $\mathrm{Var}(\rho_n)
\sim 1 / (16 n^2)$. The question of obtaining an exact or asymptotic formula
for $\E S_n$ using only the recursive relations \eqref{cnrecurs} and
\eqref{esnrecurs} remains open.

\section{Limit distribution}

In this section, we determine the limit behaviour of $Z_n$. Let $F(x)$ denote
the cumulative distribution function of $Z$; based on the definition of $Z$,
it follows that $F(x) = 0$ if $x \leq -1/2$
and $F(x) = 1$ if $x \geq 1/2$.

\begin{theorem} \label{center}
The distribution of $Z$ is a translated arcsine law: for $-1/2 \leq x \leq
1/2$,
\begin{equation*} \label{arcsin-df}
F(x) = \frac 2 \pi \arcsin \sqrt{x + 1/2} \, .
\end{equation*}
\end{theorem}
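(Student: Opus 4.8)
\emph{Proof proposal.} The plan is to bypass the finite-time laws of $Z_n$ — these are inconvenient because $(Z_n)$ alone is not a Markov chain, so they satisfy no closed recursion — and to identify the law of the limit $Z$ directly, exploiting an exact self-similarity at the steps where the segment actually shrinks. First I would record the evolution of the excess radius $T_n := \rho_n - \tfrac12 = \tfrac12 - S_n$: by \eqref{x-def}, given $T_n = t$, with probability $(\tfrac12 - t)/(\tfrac12 + t)$ nothing changes ($T_{n+1}=t$), and otherwise $T_{n+1}$ is uniform on $[0,t]$; moreover, conditionally on a change at step $n+1$ the centre moves by $\pm(T_n - T_{n+1})$, the two signs being equally likely and independent of the magnitude and of the past (this is the content of \eqref{x-def} and the display after it, together with the symmetry of $a_{n+1}$ about $Z_n$). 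Since $T_n \to 0$ a.s.\ while $T$ decreases only at changes, there are a.s.\ infinitely many changes; let $Y_0 = \tfrac12 > Y_1 > Y_2 > \cdots$ be the successive distinct values of $T_n$ and $\eps^{(1)}, \eps^{(2)}, \dots$ the signs of the successive moves. Then $Y_k = Y_{k-1} U_k$ with $U_1, U_2, \dots$ i.i.d.\ uniform on $[0,1]$, the $\eps^{(k)}$ are i.i.d.\ Rademacher independent of $(U_k)$, and, summing the centre increments over the moves,
\[
Z = \sum_{k\geq 1} \eps^{(k)}\,(Y_{k-1} - Y_k), \qquad Y_k = \tfrac12\,\textstyle\prod_{i=1}^k U_i .
\]

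Next I would peel off the first move. Set $V := Z + \tfrac12 \in [0,1]$, so that the theorem asserts that $V$ has the standard arcsine law on $[0,1]$, with density $\pi^{-1}(x(1-x))^{-1/2}$. One has $Y_0 - Y_1 = \tfrac12(1-U_1)$, and for $k \geq 2$ the term $Y_{k-1}-Y_k = \tfrac12 U_1 \prod_{i=2}^{k-1}U_i\,(1-U_k)$; hence the tail $\sum_{k\geq 2}\eps^{(k)}(Y_{k-1}-Y_k)$ equals $U_1$ times a random variable which, by the product structure of the $Y_k$ and the independence of the $\eps^{(k)}$, is distributed as $Z$ and is independent of $(\eps^{(1)},U_1)$. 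Therefore
\[
Z \stackrel{\mathcal{D}}{=} \tfrac12\,\eps\,(1-U) + U Z', \qquad Z' \stackrel{\mathcal{D}}{=} Z,
\]
with $\eps$ Rademacher, $U$ uniform on $[0,1]$, and $\eps, U, Z'$ independent; equivalently, with $B = (1+\eps)/2 \sim \mathrm{Bernoulli}(\tfrac12)$ and $V = Z+\tfrac12$,
\[
V \stackrel{\mathcal{D}}{=} B\,(1-U) + U V', \qquad V' \stackrel{\mathcal{D}}{=} V,
\]
all independent. This smoothing equation has at most one solution among laws on a bounded interval: coupling two solutions through a common $(B,U)$ and an optimal coupling of $V',\widetilde V'$ gives $\E|V-\widetilde V| = \E[U]\,\E|V'-\widetilde V'| = \tfrac12\,\E|V'-\widetilde V'|$, so the $1$-Wasserstein distance between the two laws is at most half itself, hence $0$.

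It remains to check that the arcsine law on $[0,1]$ solves the equation, which I would do via moments. Taking $n$-th moments in $V \stackrel{\mathcal{D}}{=} B(1-U)+UV'$, using $B^j = B$ for $j\geq 1$, $\E[U^{n-j}(1-U)^j] = (n-j)!\,j!/(n+1)!$, the binomial theorem and independence, one obtains for $m_n := \E V^n$ the recursion $n\,m_n = \tfrac12\sum_{i=0}^{n-1} m_i$, $m_0=1$, which determines every $m_n$; a one-line induction using $m_{n+1}/m_n = (2n+1)/(2(n+1))$ shows that $m_n = \binom{2n}{n}4^{-n}$ solves it, and these are precisely the moments of the arcsine law on $[0,1]$ (since $\int_0^1 x^n\,\pi^{-1}(x(1-x))^{-1/2}\,\d x = \Gamma(n+\tfrac12)/(\sqrt\pi\,n!) = \binom{2n}{n}4^{-n}$). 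As a law on $[0,1]$ is determined by its moments, $V$ has the arcsine law, i.e.\ $F(x) = \tfrac2\pi\arcsin\sqrt{x+1/2}$. The real obstacle is the conceptual step in the first paragraph: recognising that, although $Z_n$ by itself is not Markov, $Z$ depends on the history only through the moves, whose (rescaled magnitude, sign) sequence is exactly self-similar, so that the limit obeys a clean one-dimensional smoothing equation; after that the verification is routine. One could equally verify the fixed-point equation with characteristic functions — the arcsine law on $[0,1]$ has characteristic function $\e^{it/2}J_0(t/2)$ — which reduces to the identity $\int_0^1 J_0(su)\cos(s(1-u))\,\d u = J_0(s)$, provable from the same binomial identity.
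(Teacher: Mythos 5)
Your proposal is correct, and it splits naturally into two halves relative to the paper. The first half is the same argument in different clothing: your sequence of successive distinct excess radii $Y_k$ is exactly the paper's thinned process (with $Y_k = r_k/2 = \frac12\prod_{i\leq k}U_i$), your representation $Z=\sum_k \eps^{(k)}(Y_{k-1}-Y_k)$ is \eqref{Z-repr}, and your fixed-point equation $Z \stackrel{\mathcal{D}}{=} \tfrac12\,\eps(1-U)+UZ'$ is precisely \eqref{dist-eq}. Where you genuinely diverge is in extracting the law from that equation. The paper turns it into an integral equation for $F$, bootstraps regularity ($F\in C^1$, then $C^2$), and solves the ODE $F''(x)=F'(x)\,4x/(1-4x^2)$ subject to $F'$ being a density; this derives the arcsine density without guessing it, and is exactly what the authors point to in Section 5 as the novelty of their proof. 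You instead prove uniqueness of the fixed point by a Wasserstein-$1$ contraction (correct: both laws live on $[0,1]$, so the coupling inequality $W_1\leq \E[U]\,W_1=\tfrac12 W_1$ forces $W_1=0$) and then verify that the arcsine law solves the equation by moments; your recursion $n\,m_n=\tfrac12\sum_{i=0}^{n-1}m_i$ and the closed form $m_n=\binom{2n}{n}4^{-n}$ both check out against the arcsine moments. This second half is essentially the method-of-moments route of Hirth (1997), which the paper explicitly cites as an earlier proof of the same identity. The trade-off: your contraction lemma is a clean, elementary substitute for the paper's differentiability bootstrap, but it requires knowing the answer in advance, whereas the paper's ODE produces it.
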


\begin{proof}
By \eqref{znx}, we have to determine the limit distribution of $\sum \eps_i
X_i$. Clearly, this is not affected by the steps where $X_i=0$. We introduce
the thinned process $(\tZ_n,\tro_n)$ as follows: for $n \geq 1$, let $\xi_n$ be
independent Bernoulli(1/2) random variables, and $\tX_n$ be a uniform random
variable on $[0, 1/2 - \sum_1^{n-1} \tX_i]$. The centre of the segment after
the $n$th step of the thinned process is given by $\tZ_n = \sum_1^n \xi_i
\tX_i$, and the radius is $\tro_n= 1 - \sum_1^n \tX_i$. Plainly,
\[
Z \stackrel{\mathcal{D}}{=}  \sum_{i=1}^\infty \xi_i \tX_i.
\]
Introduce $r_n = 2 \tro_n - 1$. If $U_1, U_2, \dots$ are i.i.d. Uniform$(0,1)$
random variables, then setting $(\tZ_0, r_0) = (0,1)$,

\begin{equation} \label{zrepr}
\begin{split}
\tZ_{n+1} & =  \tZ_n + \frac{1}{2}\, \xi_{n+1} (1 - U_{n+1})  r_n \, ,\\
r_{n+1} & = U_{n+1} r_n\, .
\end{split}
\end{equation}

\noindent  Notice that after choosing $(\tZ_1, r_1)$, the process $(\tZ_2,
r_2), (\tZ_3, r_3), \dots$ is a scaled and translated copy of the original one,
which implies the distributional equation
\begin{equation} \label{dist-eq}
Z \stackrel{\mathcal{D}}{=}  r_1 \, Z^\prime + \tZ_1,
\end{equation}
where $Z^\prime$ is independent from $(\tZ_1,r_1)$, and has the same distribution
as $Z$. Thus,  for every $x \in [-1/2,1/2]$,
\begin{align*}
F(x) &= \int_0^{1/2} F\left( \frac{ x - y}{ 1 - 2y} \right) \, \d y +
\int_0^{1/2} F\left( \frac{ x + y}{ 1 - 2y} \right) \, \d y \\
&= (1 - 2x) \int_{-\infty} ^x \frac {F(z)}{(1-2z)^2} \, \d z +
(1 + 2x) \int_x^\infty  \frac {F(z)}{(1+2z)^2} \, \d z \\
&=(1 - 2x) \int_{-1/2} ^x \frac {F(z)}{(1-2z)^2} \, \d z + (1 + 2x)
\int_x^{1/2} \frac {F(z)}{(1+2z)^2} \, \d z + \frac{1 + 2x}{4} \, .
\end{align*}
This also shows that $F$ is continuously differentiable, and by differentiating
we arrive at
\begin{align*}
F'(x) =  -2 \int_{-1/2} ^x \frac {F(z)}{(1-2z)^2} \, \d z +
2 \int_x^{1/2} \frac{F(z)}{(1+2z)^2} \, \d z + \frac {4 x F(x)}{1 - 4x^2}
+ \frac{1}{2} \, .
\end{align*}
Once again, we derive that $F$ is twice differentiable (being the reason for
starting with the distribution function rather than the density function),
whence
\[
F''(x) = F'(x) \frac {4 x}{1 - 4x^2} \, .
\]
Taking into account that $F^\prime(x)$ is a density function on $[-1/2,1/2]$
yields that the solution is
\[
F'(x) = \frac{1}{ \pi \sqrt{(1/2 +x)(1/2-x)}} \, , \,
\]
the desired density function.
\end{proof}

\section{Further remarks}

Setting $v_i = U_1 U_2 \ldots U_i (1 - U_{i + 1})$, \eqref{zrepr} implies that
$\tZ_n = 1/2 \, \sum_{0}^{n-1} v_i  \xi_{i+1}$, hence the limit $Z$ has the
infinite series representation
\begin{equation} \label{Z-repr}
Z =  \frac{1}{2} \sum_{i=0}^{\infty} v_i  \xi_{i+1}.
\end{equation}
It is easy to check that $\sum_0^\infty v_i = 1$ almost surely, thus $v :=
(v_0, v_1, \ldots) \in \Delta$, where
\[
\Delta = \left\{ x=(x_0,x_1, \ldots ) \, : \, \sum_{i=0}^\infty x_i =1,
x_i \geq 0, i=0,1,\ldots \right\}
\]
is the infinite dimensional simplex. The construction of the random vector $v$
implies that it has the so-called GEM (Griffiths--Engen--McCloskey)
distribution with parameter 1 (see the residual allocation model in
Bertoin, 2006, p.89). This distribution appears in various contexts, such as
prime factorisation of a random integer (Hirth, 1997); and in particular, the
decreasing reordering of the GEM distribution is the so-called
Poisson--Dirichlet distribution, which is one of the most important
distributions in fragmentation theory, see Bertoin, 2006.

Using this terminology and \eqref{Z-repr}, Theorem \ref{center} can be
reformulated as: if $v = (v_0, v_1, \ldots )$ is a GEM distributed random
vector, and $\xi, \xi_1, \xi_2, \ldots$ is an iid sequence of Bernoulli random
variables such that $\PP ( \xi = 1) = 1/2 = \PP( \xi = -1)$, which is
independent of $v$, then $\sum_{i=0}^\infty v_i \xi_{i+1}$ has arcsine
distribution. This theorem was first proved by Donnelly and Tavar\'e (1987),
using the construction of the Poisson--Dirichlet distribution
by means of an inhomogeneous Poisson process. Later Hirth (1997) also
gave a proof by using the method of moments. As far as we know, our proof,
solving an integral equation based on the distributional equality
\eqref{dist-eq}, is new.

\bigskip

\noindent \textbf{Acknowledgements.} Our thanks are due to Imre B\'ar\'any for
introducing the problem to us and for illuminating discussions, and to Juan
Carlos Pardo for bringing the connection with the Poisson--Dirichlet law to our
attention. The authors are also grateful to the unknown referee for a number of
comments and suggestions that improved the paper.
While carrying out the research, the first named author held a
pleasant visiting scholarship at the Isaac Newton Institute for Mathematical
Sciences, University of Cambridge.

\end{document}